\subjclass[2010]{ Primary: 37B20, 37E30, 37C20; Secondary: 37A25}
\keywords{Recurrence, Generic Properties, Periodic Points.}
\title[]
      {Quantitative Recurrence for Generic Homeomorphisms}
\author{Andre Junqueira}
\address{UNIVERSIDADE FEDERAL DE VI\c{C}OSA}
\email{andre.junqueira@ufv.br}
\thanks{Partially supported by CAPES}
\newtheorem{maintheorem}{Theorem}
\newtheorem{theorem}{Theorem}
\newtheorem{corollary}[theorem]{Corollary}
\newtheorem{claim}{Claim}
\newtheorem{lemma}[theorem]{Lemma}
\theoremstyle{definition}
\newtheorem{definition}[theorem]{Definition}
\newtheorem{remark}[theorem]{Remark}
\newcommand{\N}{\mathbb{N}}
\newcommand{\R}{\mathbb{R}}
\newcommand{\SC}{{\mathcal C}}
\newcommand{\SG}{{\mathcal G}}
\newcommand{\SH}{{\mathcal H}}
\newcommand{\SM}{{\mathcal M}}
\newcommand{\SR}{{\mathcal R}}
\newcommand{\SW}{{\mathcal W}}
\begin{document}

\begin{abstract}
In this article we study quantitative recurrence for generic homeomorphisms on euclidian spaces and compact manifolds. As an application we show that the decay of correlations of generic homeomorphisms is slow.
\end{abstract}

\date{\today}

\maketitle

\section{Introduction}
The Poincar\'{e} Recurrence Theorem states that, in a dynamical system preserving a probability measure on a polish metric space, almost every orbit returns as
closely as you wish to the initial point. To be more precise, this means that:
$$\liminf_{n\to +\infty}d(T^{n}(x),x)=0.$$
Now, if a dynamical system preserve a probability measure on a polish metric space and is ergodic then almost every orbit hit as closely as you wish on a fixed point $y$ on the support of the measure. To be more precise, we have that:
$$\liminf_{n\to +\infty}d(T^{n}(x),y)=0.$$
In \cite{Bo} Boshernitzam proved that in very general conditions
$$\liminf_{n\to\infty}n^{\beta}d(T^{n}(x),x)<+\infty$$
for almost every point with respect to an invariant probability measure and for some $\beta>0$. This result means that the speed of recurrence is not too slow with respect the sequence $n^{\beta}$. So, a natural question is if the recurrence can be arbitrarily fast and the answer is yes as we will see in theorem \ref{teoprincipalrt}.
On the other hand, in \cite{Bon} it is proved that in very general conditions
$$\liminf_{n\to\infty}n^{\alpha}d(T^{n}(x),y)=+\infty$$
for almost every point with respect to an invariant probability measure and for some $\alpha>0$. This means that the speed of hitting is not too fast with respect the sequence $n^{\alpha}$. So, a natural question is if the hitting can be arbitrarily slow and the answer is yes as we will see in theorem \ref{teoprincipalht}. We also investigate the same question but in a non-compact setting as we will see in theorem \ref{teoprincipalrtinfinite}.
\section{Preliminaries}
\subsection{Compact manifolds}
Let $M$ be a compact connected Riemannian manifold with no boundary. The measures that we will work are given by the next definition:

\begin{definition}
\label{OU}
We say that a borel probability measure on $M$ is an \emph{OU}(Oxtoby-Ulam) \emph{measure} if:
\begin{enumerate}
\item $\mu$ is \emph{nonatomic}. This means that it is zero on singleton sets.
\item $\mu$ is \emph{locally positive}. This means that it is positive on every nonempty open set.
\end{enumerate}
\end{definition}
If $\mu$ is a probability measure on $M$ and $f:M\to M$ is a measurable map then we say that $f$ preserve $\mu$ if $\mu(f^{-1}(A))=A$ for all borel sets $A$. Now, let us define the main space of this work.

\begin{definition}
\label{homeopreserving}
If $\mu$ is an OU probability measure on $M$ then we define:
$$\SM[M,\mu]:=\{f:M\to M : f\,\,is\,\,a\,\,homeomorphism\,\,preserving\,\,\mu\}$$
with the \emph{uniform topology} given by the metric:
$$||f-g||:=ess\,sup_{x\in M}d(f(x),g(x))$$   
\end{definition}

The space $\SM[M,\mu]$ is not complete with this metric but there exists an equivalent metric such that this space is complete with this new metric. Then we can apply the Baire Theorem for $\SM[M,\mu]$. See \cite{BF} for more details.
A set which is the countable intersection of open sets is called a $G_{\delta}$ set. We shall call a subset $\SR\subset \SM[M,\mu]$ \emph{generic} if it contains a dense $G_{\delta}$ set. It follows from the Baire Theorem that a generic subset is dense.
Now, let us remember the classical Poincar\'{e} recurrence theorem.

\begin{theorem}[Poincar\'{e}]
Let us suppose that $(X,d)$ is a complete and separable metric space and $T:X\to X$ a measurable map preserving a borel probability measure $\mu$ on $X$. Then:
\begin{enumerate}
\item $\liminf_{n\to +\infty}d(T^{n}(x),x)=0$ for $\mu\,\,a.e.\,\,x\in X$.
\item If $y\in supp(\mu)$ and $\mu$ is ergodic then $\liminf_{n\to +\infty}d(T^{n}(x),y)=0$ for $\mu\,\,a.e.\,\,x\in X$.
\end{enumerate}
\end{theorem}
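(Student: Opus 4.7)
The plan is to prove the two parts separately, both by classical measure-theoretic arguments based on the countable base of the topology and, for part (2), on ergodicity.

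For part (1), I would fix a countable base $\{U_k\}_{k\in\nat}$ of the topology of $X$, which exists by separability. For each $k$, define the ``wandering'' set
\[ W_k := \{x\in U_k : T^n(x)\notin U_k \text{ for all } n\ge 1\}. \]
The key observation is that the sets $T^{-n}(W_k)$, $n\ge 0$, are pairwise disjoint: if $T^n(x)$ and $T^m(x)$ both lay in $W_k$ with $n<m$, then $T^{m-n}$ would send the point $T^n(x)\in U_k$ back into $U_k$, contradicting the definition of $W_k$. Since $T$ preserves the finite measure $\mu$, all these preimages have the same mass, so their mutual disjointness forces $\mu(W_k)=0$. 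Setting $N:=\bigcup_k W_k$, one has $\mu(N)=0$, and for every $x\notin N$ and every $\vep>0$ one chooses $U_k\ni x$ with $\operatorname{diam}(U_k)<\vep$ to produce some $n\ge 1$ with $T^n(x)\in U_k$, whence $d(T^n(x),x)<\vep$. Letting $\vep\to 0$ along a sequence gives $\liminf_n d(T^n(x),x)=0$.

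For part (2), I would fix $y\in\su(\mu)$, pick a sequence $\vep_j\downarrow 0$, and use that each open ball $B(y,\vep_j)$ has positive $\mu$-measure precisely because $y$ lies in the support. Then ergodicity of $T$ (via, say, Birkhoff's ergodic theorem applied to $\chi_{B(y,\vep_j)}$) gives a full-measure set $A_j\subset X$ such that for every $x\in A_j$ the orbit $\{T^n(x)\}_{n\ge 0}$ enters $B(y,\vep_j)$ infinitely often. The countable intersection $\bigcap_j A_j$ still has full measure, and for every $x$ in this intersection and every $\vep>0$ one finds $\vep_j<\vep$ together with an $n$ with $d(T^n(x),y)<\vep_j<\vep$, yielding $\liminf_n d(T^n(x),y)=0$.

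Neither step is deep; the only mildly delicate point is the countable-base bookkeeping in part (1), which is what turns the ``null set depends on $\vep$'' problem into a single $\mu$-null exceptional set $N$. Completeness of $X$ is not really used beyond ensuring separability already gives a countable base, and finiteness of $\mu$ is the crucial hypothesis allowing the disjoint-preimages argument to conclude $\mu(W_k)=0$.
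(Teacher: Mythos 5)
The paper states this Poincar\'{e} recurrence theorem as classical background in the Preliminaries and gives no proof of its own, so there is nothing to compare against; judged on its own terms, your argument is the standard one and is essentially correct. Part (2) is complete as written: positivity of $\mu(B(y,\vep_j))$ from $y\in\su(\mu)$ plus Birkhoff on the indicator gives infinitely many visits to each ball, and intersecting over $j$ yields the $\liminf$. In part (1) the disjointness of the sets $T^{-n}(W_k)$ and the resulting $\mu(W_k)=0$ are fine, but note that what your argument literally produces for $x\notin N$ is $\inf_{n\ge 1}d(T^n(x),x)=0$, not yet $\liminf_{n\to\infty}d(T^n(x),x)=0$; the upgrade needs one more line. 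If the $\liminf$ were some $a>0$, then all $n$ with $d(T^n(x),x)<a/2$ would lie in a fixed finite set, so sending $\vep\to 0$ forces $d(T^{n_0}(x),x)=0$ for some fixed $n_0$, i.e.\ $x$ is periodic, and then $d(T^{kn_0}(x),x)=0$ for all $k$ contradicts $a>0$. (Equivalently, one applies the single-return statement to every power $T^j$.) With that sentence added the proof is complete; your closing observation that completeness of $X$ is not needed and that finiteness of $\mu$ is the crucial hypothesis is also correct.
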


So, an important problem is to study the speed of those limits. 

\subsection{Euclidian Spaces}

Let us denote by $\R^{n}$ the $n-dimensional$ Euclidian Space and $\lambda$ the lebesgue measure.

\begin{definition}
Let us define the following space:
$$\SM[\R^{n},\lambda]:=\{f:\R^{n}\to \R^{n}: f\,\,is\,\,a\,\,homeomorphism\,\,preserving\,\,\lambda\}$$
\end{definition}
In this space we put the compact-open topology, where the basic open sets are given by:
$$\SC(f,K,\delta):=\{g\in \SM[\R^{n},\lambda]: |g(x)-f(x)|<\delta\,\,for\,\,\lambda\,a.e.\,\,x\in K\}$$
where $K\subset \R^{n}$ is a compact set and $\delta>0$. This space can be metrized as follows: Let $K_{i}$ be a sequence of compact sets whose union is $X$. Then the compact-open topology is induced by the complete metric
$$U(f,g)=\sum_{i=1}^{+\infty}\frac{u_{K_{i}}(f,g)}{1+u_{K_{i}}(f,g)},$$
where
$$u_{S}(f,g)=\max\left(\max_{x\in S}|f(x)-g(x)|,\max_{x\in S}|f^{-1}(x)-g^{-1}(x)|\right).$$

\begin{remark}
The metric space $\SM[\R^{n},\lambda]$ is complete and then we can apply the Baire Theorem. In particular we can define generic sets are before.
\end{remark}
The Poincar\'{e} recurrence theorem also holds in this setting but we need an additional hypothesis.

\begin{definition}
We say that a measurable map $T$ on $(\R^{n},\lambda)$ is conservative if always that $E$ is a borel set such that $\{T^{-n}(E)\}_{n\geq 0}$ are disjoint then we have that $\lambda(E)=0$
\end{definition}

Now, let us remember the Poincar\'{e} recurrence theorem in this setting.

\begin{theorem}
Let us suppose that $T$ is a measurable map conservative and invariant with respect to lebesgue measure on $\R^{n}$. Then:
$$\liminf_{n\to +\infty}d(T^{n}(x),x)=0$$ 
for $\lambda\,\,a.e.\,\,x\in \R^{n}$.
\end{theorem}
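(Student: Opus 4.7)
The plan is to adapt the classical Halmos recurrence argument for conservative, $\sigma$-finite measure-preserving systems to the present setting. Fix a countable basis $\{U_k\}_{k\geq 1}$ for the topology of $\R^n$ consisting of open balls with rational centers and rational radii. It is enough to prove that for $\lambda$-a.e.\ $x$ and every $k$ with $x \in U_k$, the orbit of $x$ visits $U_k$ infinitely often: since every neighborhood of every point contains some $U_k$ of arbitrarily small diameter, this will force $\liminf_n d(T^n(x),x)=0$.

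For each $k$, set $W_k := \{x \in U_k : T^n(x) \notin U_k \text{ for all } n \geq 1\}$, the set of points of $U_k$ that never return to $U_k$. The first key step is to verify that the preimages $W_k, T^{-1}(W_k), T^{-2}(W_k), \ldots$ are pairwise disjoint. Indeed, if $x \in T^{-i}(W_k) \cap T^{-j}(W_k)$ with $0 \leq i < j$, then both $T^i(x)$ and $T^j(x)=T^{j-i}(T^i(x))$ lie in $U_k$, contradicting the defining property of $W_k$ applied to $T^i(x)$ with the return time $n=j-i\geq 1$. The conservativity hypothesis therefore forces $\lambda(W_k)=0$.

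Next, define $B_k := \{x \in U_k : T^n(x) \in U_k \text{ for only finitely many } n\}$. For $x \in B_k$, let $N(x)$ be the largest $n \geq 0$ with $T^n(x)\in U_k$; then $T^{N(x)}(x) \in W_k$, whence $B_k \subset \bigcup_{N\geq 0} T^{-N}(W_k)$. Since $T$ preserves $\lambda$, each term on the right has measure zero, so $\lambda(B_k)=0$ by countable subadditivity. Setting $B := \bigcup_k B_k$ gives a $\lambda$-null set outside of which the orbit visits every basic $U_k$ containing $x$ infinitely often.

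Finally, for $x \notin B$ and $\varepsilon > 0$, choose $k$ with $x \in U_k$ and $\operatorname{diam}(U_k)<\varepsilon$; infinitely many iterates $T^n(x)$ lie in $U_k$, so $d(T^n(x),x)<\varepsilon$ for those $n$, giving $\liminf_n d(T^n(x),x)=0$. The main place where care is needed is the disjointness computation behind $\lambda(W_k)=0$, since everything downstream is a routine countable-union argument. Noncompactness of $\R^n$ and the infinite total mass of $\lambda$ never enter directly: they are absorbed into the conservativity assumption, which is exactly the substitute for finiteness of the ambient measure that the Poincar\'e argument requires.
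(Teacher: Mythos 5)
Your proof is correct. Note that the paper states this theorem only as background (it is the classical Poincar\'e recurrence theorem for conservative measure-preserving maps, cited from the infinite ergodic theory literature) and gives no proof of its own, so there is nothing to compare against; your argument --- disjointness of the preimages of the set of non-returning points in each basic ball, conservativity to kill that set, measure invariance to kill the set of finitely-returning points, and a countable basis of small balls to conclude --- is exactly the standard Halmos adaptation of Poincar\'e's argument and is complete.
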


It is well known that every ergodic map with respect to a non-atomic measure is conservative(see \cite{Aa}) and that the ergodicity is a generic property in $(\R^{n},\lambda)$(see \cite{BF}) and then the Poincar\'{e} recurrence theorem holds in a generic set. Then it is a natural question to study the speed of recurrence in this setting.

\section{Main statements}


\begin{maintheorem}
\label{teoprincipalrt} 
Let be $(r_{n})\longrightarrow +\infty$ in $\R^{+}$, $(Y,d)$ a metric space and $f:M\to Y$ a continuous map. Then there exists a generic set $\SR\subset \SM[M,\mu]$ such that if $T\in \SR$ then:
$$\liminf_{n\to +\infty} r_{n}d(f(T^{n}(x)),f(x))<+\infty\,\,\,for\,\,\mu\,\,a.e.\,\,x\in M.$$
\end{maintheorem}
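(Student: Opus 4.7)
The plan is a Baire category argument, whose main nontrivial input is the classical density of periodic measure-preserving homeomorphisms in $\SM[M,\mu]$ (Oxtoby--Ulam/Alpern--Prasad type, see \cite{BF}): for every $T_{0}\in\SM[M,\mu]$, every $\eps>0$, and every integer $p_{0}$, there exists $T\in\SM[M,\mu]$ with $\|T-T_{0}\|<\eps$ and $T^{p}=\operatorname{id}$ for some $p\geq p_{0}$. For $N,K\in\N$ with $K\geq N$, set
$$A_{N,K}(T):=\bigcup_{n=N}^{K}\{x\in M : r_{n}d(f(T^{n}(x)),f(x))<1\},$$
$$\SO_{N,K}:=\{T\in\SM[M,\mu] : \mu(A_{N,K}(T))>1-1/N\},\qquad \SG_{N}:=\bigcup_{K\geq N}\SO_{N,K},$$
and declare $\SR:=\bigcap_{N}\SG_{N}$ to be the candidate generic set. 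The choice of constant $1$ is inconsequential: any positive bound works, since we only need the liminf to be finite.

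First I would verify that each $\SO_{N,K}$ is open, whence $\SG_{N}$ is open. Given $T\in\SO_{N,K}$, the closed sets
$$A_{N,K}^{\delta}(T):=\bigcup_{n=N}^{K}\{x\in M : r_{n}d(f(T^{n}(x)),f(x))\leq 1-\delta\}$$
increase to $A_{N,K}(T)$ as $\delta\downarrow 0$, so one may pick $\delta>0$ with $\mu(A_{N,K}^{\delta}(T))>1-1/N$. Since $[N,K]$ is a finite range, $M$ is compact, and $f$ is continuous, there is a uniform neighborhood of $T$ on which $r_{n}\sup_{x}d(f(T'^{n}(x)),f(T^{n}(x)))<\delta/2$ simultaneously for every $n\in[N,K]$ (using that, in the uniform topology, the ess-sup metric on continuous maps agrees with the sup). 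A triangle inequality then gives $A_{N,K}^{\delta}(T)\subseteq A_{N,K}(T')$, so $T'\in\SO_{N,K}$.

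Next I would establish density of $\SG_{N}$. By the cited density result, any $T_{0}\in\SM[M,\mu]$ can be approximated by some $T\in\SM[M,\mu]$ with $T^{p}=\operatorname{id}$ for a chosen $p\geq N$. For such $T$ one has $d(f(T^{p}(x)),f(x))=0$ for every $x\in M$, so $A_{N,p}(T)=M$ and $T\in\SO_{N,p}\subseteq\SG_{N}$. Hence each $\SG_{N}$ is open and dense, and $\SR$ is a dense $G_{\delta}$.

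Finally, for $T\in\SR$, set $A_{N}(T):=\bigcup_{K\geq N}A_{N,K}(T)$; one has $\mu(A_{N}(T))\geq 1-1/N$, and the family is decreasing in $N$, so $\mu(\bigcap_{N}A_{N}(T))=1$. On this full-measure set, infinitely many $n$ satisfy $r_{n}d(f(T^{n}(x)),f(x))<1$, giving $\liminf_{n\to+\infty}r_{n}d(f(T^{n}(x)),f(x))\leq 1<+\infty$. The main technical point is the openness step: it depends crucially on truncating at a finite stage $K$, so that only finitely many continuous maps $T\mapsto T^{n}$ need to be controlled simultaneously, since these maps fail to be equicontinuous in $n$.
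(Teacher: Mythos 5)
Your overall architecture --- the finite truncation $A_{N,K}$ to get open sets, the $\delta$-shrinking trick for openness, and approximation by maps with abundant periodic points for density --- is essentially the paper's proof, and your openness and conclusion steps are correct (indeed argued more carefully than in the paper, where the claimed inclusion $X^{f}_{n,k}(T)\subset X^{f}_{n,k}(S)$ really does need your $\delta$-trick or the extra quantifier over $k$). The genuine gap is in the density input. The statement you invoke --- that for every $T_{0}\in\SM[M,\mu]$, $\eps>0$ and $p_{0}$ there is $T$ with $\|T-T_{0}\|<\eps$ and $T^{p}=\mathrm{id}$ globally for some $p\geq p_{0}$ --- is false for general compact $M$. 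Take $M=\T^{2}$ with Lebesgue measure and $T_{0}$ a hyperbolic toral automorphism: any $g$ uniformly within the injectivity radius of $T_{0}$ is homotopic to $T_{0}$ and hence induces the same infinite-order action on $H_{1}(\T^{2})=\Z^{2}$, so $g^{p}=\mathrm{id}$ is impossible for such $g$. What is true, and what the paper proves via the Daalderop--Fokkink corollary, is the weaker lemma: for every $\eps>0$ the set of $T$ with $\mu(\mathrm{Per}(T))>1-\eps$ is dense. Such $T$ are periodic on finitely many small balls of total measure $>1-\eps$ (with periods varying from ball to ball) but agree with the original map elsewhere, and need not be globally periodic.

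Your proof survives with this weaker input at the cost of one extra line. If $\mu(\mathrm{Per}(T))>1-1/N$, then the sets $\{x\in M: T^{n}(x)=x\ \text{for some}\ n\in[N,K]\}$ increase, as $K\to\infty$, to a set containing $\mathrm{Per}(T)$ (a point of period $p_{x}$ has a multiple of $p_{x}$ in $[N,K]$ as soon as $K\geq N+p_{x}$), so by monotone convergence some finite $K$ gives $\mu(A_{N,K}(T))>1-1/N$ and hence $T\in\SO_{N,K}\subset\SG_{N}$. What you cannot do is take $K$ equal to a single predetermined period $p$, because the periods produced by the approximation lemma are not uniformly controlled. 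With that repair the argument is complete and coincides in substance with the paper's.
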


\begin{maintheorem}
\label{teoprincipalht} 
Let be $(r_{n})\longrightarrow +\infty$ in $\R^{+}$, $(Y,d)$ a metric space, $y\in M$ and $f:M\to Y$ a continuous map such that $\mu(f^{-1}(\{f(y)\})=0$. Then there exists a generic set $\SG\subset \SM[M,\mu]$ such that if $T\in \SG$ then:
$$\liminf_{n\to +\infty} r_{n}d(f(T^{n}(x)),f(y))=+\infty\,\,\,for\,\,\mu\,\,a.e.\,\,x\in M.$$
\end{maintheorem}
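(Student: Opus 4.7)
The plan is to reformulate the conclusion as a vanishing-limsup property and apply Baire category in $\SM[M,\mu]$. Set $V_\de:=f^{-1}(B(f(y),\de))$, which is open in $M$; the hypothesis together with continuity of measure gives $\mu(\ov{V_\de})\to 0$ as $\de\to 0^+$. Put $B_n^N(T):=T^{-n}(\ov{V_{N/r_n}})$. The stated conclusion is equivalent to $\mu(\limsup_n B_n^N(T))=0$ for every $N\in\N$, and since a countable intersection of dense $G_\de$ sets is dense $G_\de$, it suffices to produce, for each fixed $N$, a dense $G_\de$ subset of $\SM[M,\mu]$ on which this single vanishing holds.

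I would first verify the conclusion at every periodic $T$, which form a dense subset of $\SM[M,\mu]$ by Oxtoby--Ulam. If $T^p=\mathrm{Id}$ and $\tilde\de_k:=\sup_{n\ge k}N/r_n$ (which tends to $0$ because $r_n\to\infty$), then $T^{-n}=T^{-(n\bmod p)}$ gives
$$\bigcup_{n\ge k}B_n^N(T)\subset \bigcup_{j=0}^{p-1}T^{-j}\bigl(\ov{V_{\tilde\de_k}}\bigr),$$
whose measure is at most $p\,\mu(\ov{V_{\tilde\de_k}})\to 0$ as $k\to\infty$, so $\mu(\limsup_n B_n^N(T))=0$. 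The genericity upgrade uses the semicontinuity: for each finite pair $k\le L$, the map $T\mapsto \mu(\bigcup_{n=k}^{L}B_n^N(T))$ is upper semicontinuous, because $T_\ell\to T$ uniformly forces $T_\ell^n\to T^n$ pointwise for every fixed $n$, and closedness of each $\ov{V_{N/r_n}}$ together with reverse Fatou for set-limsups yields the semicontinuity. Hence every
$$\SU_{N,j,k,L}:=\Bigl\{T:\mu\bigl(\textstyle\bigcup_{n=k}^{L}B_n^N(T)\bigr)<1/j\Bigr\}$$
is open, and $\bigcap_L \SU_{N,j,k,L}$ is a $G_\de$ subset of $\{T:\mu(\bigcup_{n\ge k}B_n^N(T))\le 1/j\}$.

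The main obstacle I foresee is that the naive candidate $\bigcap_j\bigcup_k\bigcap_L \SU_{N,j,k,L}$ contains an existential $\bigcup_k$ and is only a countable union of $G_\de$ sets, hence not manifestly a $G_\de$ itself. I would resolve this by approximating $\ov{V_{N/r_n}}$ from outside by the open sets $U_n^m:=f^{-1}(B(f(y),N/r_n+1/m))$ and working with the lower semicontinuous functional $\Psi_{k,m}(T):=\mu(\bigcup_{n\ge k}T^{-n}(U_n^m))$. A finite-orbit swap valid for periodic $T$ shows $\bigcap_m\bigcup_{n\ge k}T^{-n}(U_n^m)=\bigcup_{n\ge k}B_n^N(T)$, forcing $\inf_{k,m}\Psi_{k,m}(T)=0$ at every periodic $T$. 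Combining the openness of $\{\Psi_{k,m}>c\}$ with the finite-window upper semicontinuity above, each periodic $T$ sits inside an open neighborhood on which a controlled tail union of the $B_n^N$'s has $\mu$-measure below $1/j$; collecting such neighborhoods over a countable dense family of periodic centers and intersecting over $j$ and $N$ yields the required dense $G_\de$ generic set $\SG\subset\SM[M,\mu]$.
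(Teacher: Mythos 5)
Your reduction to $\mu(\limsup_n B_n^N(T))=0$ and the use of closed targets plus reverse Fatou to get openness of the finite-window sets $\SU_{N,j,k,L}$ are fine, but the proof has two genuine gaps. First, it hinges on the density in $\SM[M,\mu]$ of globally periodic homeomorphisms ($T^p=\mathrm{Id}$), which you attribute to Oxtoby--Ulam; this is false in general. On $M=\T^2$ with Lebesgue measure, any $T$ with $T^p=\mathrm{Id}$ induces a finite-order automorphism of $H_1(\T^2)\cong\Z^2$, whereas every homeomorphism uniformly within the injectivity radius of a hyperbolic toral automorphism $A$ is homotopic to $A$ and induces the infinite-order element $A_*$; hence no periodic homeomorphism approximates $A$. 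What is actually available (and what the paper proves as Lemma~\ref{Alpern}, via Daalderop--Fokkink) is only that for each $\epsilon>0$ the maps with $\mu(\per(T))>1-\epsilon$ are dense. Your inclusion $\bigcup_{n\ge k}B_n^N(T)\subset\bigcup_{j=0}^{p-1}T^{-j}\bigl(\ov{V_{\tilde\de_k}}\bigr)$ uses $T^{-n}=T^{-(n\bmod p)}$ globally and does not survive this weakening; the correct replacement is pointwise: if $x\in\per(T)$ lies in $\limsup_nB_n^N(T)$ then some point of its finite orbit satisfies $f(T^jx)=f(y)$, so $\limsup_nB_n^N(T)\cap\per(T)$ is $\mu$-null by the hypothesis $\mu(f^{-1}(\{f(y)\}))=0$. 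This yields only $\mu(\limsup_nB_n^N(T))<\epsilon$ on an $\epsilon$-dependent dense family, which forces the extra intersection over $\epsilon=1/q$ that the paper's sets $\SG_{\epsilon,p}$ and $\SG_p=\bigcap_q\SG_{1/q,p}$ are there to organize.

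Second, you correctly identify the real obstacle --- that the natural candidate set involves an existential $\bigcup_k$ and is a priori only a countable union of $G_\de$'s --- but the proposed resolution does not close it. The functional $\Psi_{k,m}(T)=\mu\bigl(\bigcup_{n\ge k}T^{-n}(U_n^m)\bigr)$ is lower semicontinuous, so the openness of $\{\Psi_{k,m}>c\}$ propagates \emph{largeness} of the tail union to a neighborhood, not smallness; and the finite-window upper semicontinuity controls only $\bigcup_{n=k}^{L}$ for fixed $L$. Neither yields your asserted conclusion that each periodic $T$ has an open neighborhood on which the infinite tail union $\bigcup_{n\ge k}B_n^N(\cdot)$ has measure below $1/j$; indeed no such neighborhood can exist in general, since for $S$ arbitrarily close to $T$ the iterates $S^n$ and $T^n$ separate as $n\to\infty$ while $\mu(S^{-n}(\ov{V_\de}))=\mu(\ov{V_\de})$ for every $n$, so the tail cannot be controlled uniformly on any neighborhood. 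This is precisely the point that the paper's Claim~\ref{medidahitting} and the identity $\SG_{\epsilon,p}=\bigcap_{m}\bigcup_{l>m}\SG_{\epsilon,p}^{m,l}$ address by reducing everything to finite windows before taking the $G_\de$ intersection; your sketch needs an equally explicit finite-window bookkeeping, together with a verification that the resulting $G_\de$ set is still contained in $\{T:\mu(\limsup_nB_n^N(T))=0\}$, before it can be considered a proof.
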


\begin{maintheorem}
\label{teoprincipalrtinfinite} 
Let be $(r_{n})\longrightarrow +\infty$ in $\R^{+}$. Then there exists a generic set  $\SH\subset \SM[\R^{n},\lambda]$ such that if $T\in \SH$ then:
$$\liminf_{n\to +\infty} r_{n}d(T^{n}(x),x)<+\infty\,\,\,for\,\,\lambda\,\,a.e.\,\,x\in \R^{n}.$$
\end{maintheorem}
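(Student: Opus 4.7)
The plan is to mirror the proof of Theorem A, using a compact exhaustion to handle the non-compactness of $\R^n$. Fix the countable exhaustion by closed balls $B_R \subset \R^n$ of integer radius $R \geq 1$. For each pair of positive integers $R, N$ and each positive rational $\epsilon$, define
$$\SV_{R, \epsilon, N} := \bigl\{T \in \SM[\R^n, \lambda] : \lambda\bigl(\{x \in B_R : \exists\, m > N,\, r_m d(T^m(x), x) < 1\}\bigr) > \lambda(B_R) - \epsilon\bigr\}.$$
Set $\SH := \bigcap_{R, \epsilon, N} \SV_{R, \epsilon, N}$; this is a countable intersection, and will serve as the required dense $G_\delta$ once each $\SV_{R, \epsilon, N}$ is shown to be open and dense.

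Openness is routine: if $T \in \SV_{R, \epsilon, N}$, monotone convergence lets me truncate the quantifier to $m \in (N, K]$ for some $K$ large enough that the measure estimate persists with slack. Since the iterate maps $T \mapsto T^m$ are continuous in the compact-open topology, for $T'$ sufficiently close to $T$ the iterates $T'^m$ are uniformly close to $T^m$ on $B_R$ for every $m \leq K$, so the analogous set for $T'$ differs from that for $T$ by at most the slack.

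Density is the core point. I would invoke a non-compact analogue of the Alpern--Prasad cyclic cube-permutation approximation lemma, in the spirit of \cite{BF}: given $T \in \SM[\R^n, \lambda]$, $\eta > 0$, and integers $R, K$, one can produce $\tilde T$ with $U(T, \tilde T) < \eta$ such that $\tilde T = T$ outside some larger ball $B_{R'}$, and on $B_R$ the map $\tilde T$ cyclically permutes a partition into cubes of side length $\delta$ with common period $K$. For such a $\tilde T$, every interior cube-point $x$ satisfies $d(\tilde T^K(x), x) \leq c_n \delta$, where $c_n$ is the cube-diameter constant. Choosing $K > N$ first (permissible since $r_K \to \infty$) and then $\delta < 1/(r_K c_n)$ forces $r_K d(\tilde T^K(x), x) < 1$ on a $\lambda$-full subset of $B_R$, so $\tilde T \in \SV_{R, \epsilon, N}$. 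For $T \in \SH$ the defining measure then equals $\lambda(B_R)$ for every $\epsilon$, and intersecting over $N$ and $R$ shows that $\lambda$-a.e.\ $x$ admits infinitely many $m$ with $r_m d(T^m(x), x) < 1$, yielding $\liminf_m r_m d(T^m(x), x) \leq 1 < +\infty$.

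The main obstacle I expect is precisely the density step. The classical cube-permutation technique of Alpern--Prasad is formulated on a compact manifold with a finite OU measure, so it must be adapted to the infinite Lebesgue measure on $\R^n$. The natural strategy is to execute the cyclic cube permutation on $B_R$ and then construct a Lebesgue-preserving homeomorphism of the annulus $B_{R'} \setminus B_R$ interpolating between $\tilde T|_{\partial B_R}$ (as read off from the permutation) and $T|_{\partial B_{R'}}$; producing this interpolating homeomorphism inside the compact-open topology is the genuinely delicate input, and is exactly where the structural results of \cite{BF} for $\SM[\R^n, \lambda]$ must be invoked.
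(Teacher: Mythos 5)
Your overall architecture --- a countable intersection of open sets, openness obtained by truncating the existential quantifier over $m$ and using continuity of $T\mapsto T^{m}$ on compacta, and density obtained by approximating an arbitrary $T$ by a map that is essentially periodic on a prescribed ball --- is the same as the paper's, and your reduction of the theorem to the density statement is sound. The genuine gap is in the density lemma you propose to invoke, and it is not merely a deferred technicality: as stated it is false. You ask for $\tilde T$ with $U(T,\tilde T)<\eta$ such that $\tilde T$ cyclically permutes a partition of $B_{R}$ into cubes; this forces $\tilde T(B_{R})=B_{R}$, but a general $T\in\SM[\R^{n},\lambda]$ need not map $B_{R}$ anywhere near itself (take $T$ a translation by a vector of length $10R$, which lies in $\SM[\R^{n},\lambda]$), so any $\tilde T$ preserving $B_{R}$ satisfies $u_{B_{R}}(T,\tilde T)\geq \dist(B_{R},T(B_{R}))>0$ and cannot be made close to $T$ in the compact-open topology. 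The paper's Lemma \ref{Periodicoinfinito} is built precisely to avoid this: it first replaces $T$ by a homeomorphism $\hat f$ agreeing with $T$ on the compact set $K$ of the basic neighborhood and leaving invariant a \emph{larger} cube $C\supset K\cup T(K)$ (Lemma 12.2 of \cite{BF}), performs the periodic approximation of Lemma \ref{Alpern} inside the invariant cube $C$, repairs the measure distortion on an annulus $C_{1}\setminus C$ via the Homeomorphic Measures Theorem, and declares the map to be the identity outside $C_{1}$. The invariance of the enlarged cube is the ingredient missing from your argument; it is obtained by modifying the map off $K$, not by shrinking the size of the perturbation, and this is exactly the interpolation step you flag at the end without resolving.

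A smaller point: the cyclic cube permutation with a common period $K>N$ and the side-length constraint $\delta<1/(r_{K}c_{n})$ are unnecessary. A point that is merely periodic of \emph{any} period $p$ satisfies $r_{m}d(T^{m}(x),x)=0$ for every multiple $m$ of $p$, hence lies in your good set for every $N$ and already gives $\liminf_{m}r_{m}d(T^{m}(x),x)=0$. So it suffices, as in the paper, to make $\lambda(\per(\tilde T)^{c})$ small with no control whatsoever on the periods, which is what the proof of Lemma \ref{Alpern} delivers once the invariant-cube issue above is settled; keeping the argument at that level also removes any dependence of the construction on $N$ and on the sequence $(r_{n})$.
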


To finish we have a consequence on the decay of correlations. Before let us recall this notion.

\begin{definition}
\label{superpolynomial}
Let $\phi$,$\psi$:$M\to \R$ Lipschitz functions on $M$. If $(M,T,\mu)$ is a measure preserving system then we say that $T$ has a superpolynomial decay of correlations if:
$$\left|\int_{X}(\phi\circ T^{n})\psi d\mu - \int_{X}\phi d\mu \int_{X}\psi d\mu\right| \leq ||\phi|| ||\psi|| \theta_{n}$$
where $\lim_{n\to +\infty}\theta_{n}n^{p}=0$ for all $p>0$ and $||.||$ is the Lipschitz norm. 
\end{definition}

Now, let us recall the definition of local dimension with respect to a measure.

\begin{definition}
\label{localdimension}
Let $\mu$ be a borel probability measure on $M$. Then we define the local dimension of $\mu$ in $x\in M$ as:
$${d}_{\mu}(x):=\lim_{r\to 0}\frac{\log \mu(B(x,r))}{\log r}$$
if this limit there exists.
\end{definition}

Then we have the following:

\begin{corollary}
\label{genericdecay}
Let us suppose that there exists $y\in M$ such that $d_{\mu}(y)>0$. Then there exists a generic set $\SG_{1}\subset \SM[M,\mu]$ such that if $T\in\SG_{1}$ then the decay of correlations of $T$ with respect to $\mu$ is not super-polynomial.
\end{corollary}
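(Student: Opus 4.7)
The strategy is to derive a contradiction from Theorem~\ref{teoprincipalht}. Write $d := d_\mu(y) > 0$ and fix any $\alpha \in (0, 1/d)$. I apply Theorem~\ref{teoprincipalht} with $Y = M$, $f = \mathrm{id}_M$, the point $y$, and the sequence $r_n := n^\alpha$; the hypothesis $\mu(f^{-1}(\{f(y)\})) = \mu(\{y\}) = 0$ is free because $\mu$ is nonatomic. This produces a generic set $\SG_1 \subset \SM[M,\mu]$ such that, for every $T \in \SG_1$ and $\mu$-a.e.\ $x \in M$,
$$n^\alpha\, d(T^n(x), y)\, \longrightarrow\, +\infty.$$
It remains to show that no $T \in \SG_1$ admits super-polynomial decay of correlations.

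Suppose, toward a contradiction, that $T \in \SG_1$ satisfies Definition~\ref{superpolynomial} with rate $\theta_n$. Introduce the Lipschitz bumps $\phi_r(x) := \max(0, 1 - d(x,y)/r)$, which satisfy $\tfrac{1}{2}\mathbf{1}_{B(y,r/2)} \leq \phi_r \leq \mathbf{1}_{B(y,r)}$ and $\|\phi_r\| \leq 1 + 1/r$. The hypothesis $d_\mu(y) = d$ gives, for any small $\epsilon > 0$ and all $r$ small, $\int \phi_r\, d\mu \geq c_1\, r^{d+\epsilon}$. Setting $r_n' := n^{-\alpha}$ and using the correlation bound with $\phi = \psi = \phi_{r_n'}$ produces
$$\int \phi_{r_n'}\,(\phi_{r_n'} \circ T^n)\, d\mu \;\geq\; c_1^2\, n^{-2\alpha(d+\epsilon)}\;-\;(1 + n^\alpha)^2\, \theta_n.$$
Super-polynomial decay forces the error to be $o(n^{-p})$ for every $p$, and since $\alpha d < 1$ one can pick $\epsilon$ with $2\alpha(d+\epsilon) < 1$. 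Thus for large $n$ the sets $E_n := B(y, r_n') \cap T^{-n}B(y, r_n')$ satisfy $\mu(E_n) \geq c\, n^{-2\alpha(d+\epsilon)}$ and $\sum_n \mu(E_n) = +\infty$.

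A second-moment (Chung--Erd\H{o}s, a.k.a. dynamical Borel--Cantelli) argument then yields $\mu(\limsup_n E_n) > 0$: a second application of the correlation inequality to the pairing $\phi_{r_i'}\cdot(\phi_{r_j'} \circ T^{j-i})$, together with $\mu$-invariance, gives $\mu(E_i \cap E_j) \leq C\, \mu(E_i)\mu(E_j) + \mathrm{poly}(i,j)\,\theta_{|i-j|}$, and super-polynomial $\theta_n$ makes the error contribution to $\sum_{i,j \leq N}\mu(E_i \cap E_j)$ negligible against $\bigl(\sum_{n \leq N}\mu(E_n)\bigr)^2$. For any $x$ in the resulting positive-measure set, $d(T^n(x), y) < n^{-\alpha}$ for infinitely many $n$, which contradicts the conclusion $n^\alpha d(T^n(x), y) \to +\infty$ valid on a full-measure set by Theorem~\ref{teoprincipalht}.

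The principal obstacle is precisely this quasi-independence step: it must absorb the polynomial blow-up $\|\phi_{r_n'}\| \sim n^\alpha$ into the super-polynomial decay of $\theta_n$, which is exactly the structural reason why failure of super-polynomial (rather than merely polynomial) decay is what the corollary asserts. The remaining pieces -- choosing the bumps, exploiting $d_\mu(y) > 0$, and the single application of Theorem~\ref{teoprincipalht} -- are routine.
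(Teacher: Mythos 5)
Your proposal follows essentially the same route as the paper: apply Theorem \ref{teoprincipalht} with $f=\mathrm{id}$ and $r_n=n^{\alpha}$ for an exponent $\alpha<1/d_{\mu}(y)$ to get a generic set on which the hitting of $y$ is slow, and then show this is incompatible with the fast hitting forced by super-polynomial decay of correlations. The only real difference is in how that second ingredient is obtained. The paper simply invokes Galatolo \cite{Ga}: under super-polynomial decay, the shrinking balls $B(y,n^{-1/\beta})$ with $\beta>d_{\mu}(y)$ form a Borel--Cantelli sequence, giving $\liminf_n n^{1/\beta}d(T^n(x),y)\le 1$ for $\mu$-a.e.\ $x$, in direct contradiction with Theorem \ref{teoprincipalht}. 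You instead sketch a direct second-moment (Chung--Erd\H{o}s) proof of that Borel--Cantelli statement. That is the standard argument behind \cite{Ga}, and your first-moment step is fine, but the quasi-independence step as written does not quite close: your sets $E_n=B(y,r_n')\cap T^{-n}B(y,r_n')$ make $E_i\cap E_j$ a four-fold intersection that a single application of the (two-point, Lipschitz-observable) correlation bound does not control --- note that $\phi\circ T^i$ need not be Lipschitz for a mere homeomorphism $T$ --- and relaxing to $\mu(E_i\cap E_j)\le\mu\bigl(B_i\cap T^{-(j-i)}B_j\bigr)$ yields $C\mu(B_i)\mu(B_j)$, which can exceed $\mu(E_i)\mu(E_j)$ by polynomial factors and break the Chung--Erd\H{o}s ratio. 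The cleaner version runs the second-moment argument on $A_n:=T^{-n}B(y,n^{-\alpha})$, whose measures equal $\mu(B(y,n^{-\alpha}))\ge c\,n^{-\alpha(d+\epsilon)}$ by invariance, and one only needs $\mu(\limsup A_n)>0$ to contradict the a.e.\ conclusion of Theorem \ref{teoprincipalht}; even there, the bookkeeping of the $\epsilon$-losses from the local dimension and of the $n^{\alpha}$ blow-up of the Lipschitz norms is precisely the content delegated to \cite{Ga}. So: same architecture as the paper, correct in outline, but the self-contained Borel--Cantelli step needs to be either repaired along these lines or replaced by the citation.
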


\section{Proofs}

\begin{proof}(Theorem \ref{teoprincipalrt})
Let us begin with the following lemma:

\begin{lemma}
\label{Alpern}
Let be $\mu$ an O.U. measure on $M$. Then, for every $\epsilon>0$ the set of homeomorphisms $T\in \SM(M,\mu)$ such that $\mu(Per(T))>1-\epsilon$ is dense in $\SM[M,\mu]$, where $Per(T)$ denotes the set of periodic points of $T$.
\end{lemma}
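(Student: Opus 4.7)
The plan is to reduce to a canonical model via the Oxtoby-Ulam theorem and then apply the Lax-Alpern cyclic approximation. By the Oxtoby-Ulam theorem, every OU measure $\mu$ on a compact connected manifold $M$ without boundary is measure-theoretically isomorphic, via a homeomorphism $h: M \to I^n$, to Lebesgue measure on the standard cube $I^n$ (with suitable boundary identifications). Conjugation by $h$ yields a topological isomorphism between $\SM[M,\mu]$ and $\SM[I^n,\lambda]$, so it suffices to establish the density statement in this model case.

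Fix $T \in \SM[I^n,\lambda]$ and $\eta > 0$. Choose a dyadic partition of $I^n$ into $N$ closed sub-cubes $Q_1,\dots,Q_N$ of equal Lebesgue measure and diameter less than $\eta/3$. By Lax's theorem there exist a cyclic permutation $\sigma$ of the index set $\{1,\dots,N\}$ and a map $P: I^n \to I^n$ whose restriction to each $Q_i$ is the rigid translation onto $Q_{\sigma(i)}$, such that $\sup_{x}|P(x)-T(x)| < \eta$. Every interior point of every $Q_i$ is then a periodic point of $P$ with period dividing $N$.

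The combinatorial map $P$ fails to be continuous on the skeleton of the partition, but by Alpern's refinement of Lax's theorem one can perturb $P$ on a small neighborhood $U$ of this skeleton --- of Lebesgue measure less than $\epsilon$ --- to obtain a genuine measure-preserving homeomorphism $S$ that still lies within $\eta$ of $T$ in the uniform metric. On the complement $I^n \setminus U$ the map $S$ still coincides with the rigid translations of $\sigma$, so $S^N$ is the identity there. Hence $\lambda(\operatorname{Per}(S)) \geq \lambda(I^n \setminus U) > 1 - \epsilon$, and conjugating back by $h^{-1}$ yields a homeomorphism in $\SM[M,\mu]$ arbitrarily close to the original $T$ and with $\mu(\operatorname{Per}(\cdot)) > 1 - \epsilon$.

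The main obstacle is the third step: realizing a combinatorial cyclic permutation of cells as a continuous measure-preserving homeomorphism while controlling the measure of the perturbation region. This is the content of Alpern's strengthening of the Lax approximation theorem, where an inductive measure-preserving gluing argument --- itself resting on the Oxtoby-Ulam realization theorem applied cell-by-cell --- replaces the discontinuous rigid-translation jumps across the skeleton by honest homeomorphisms confined to a set of arbitrarily small measure.
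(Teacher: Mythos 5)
Your global strategy --- Lax--Alpern cyclic dyadic approximation followed by a homeomorphic realization of the permutation off a small neighborhood of the skeleton --- is a legitimate classical route, and your second and third steps are essentially sound: since the rigid model $P$ maps a uniform collar $U$ of the skeleton onto itself, $U^{c}$ is $P$-invariant and $S^{N}=\mathrm{id}$ does hold on $U^{c}$ once $S=P$ there (a point worth stating, since a priori $S^{N}=\mathrm{id}$ only on $\bigcap_{j=0}^{N-1}S^{-j}(U^{c})$). The genuine gap is in your first step. A closed manifold $M$ is not homeomorphic to $I^{n}$, so there is no homeomorphism $h:M\to I^{n}$ and conjugation does not give a topological isomorphism between $\SM[M,\mu]$ and $\SM[I^{n},\lambda]$. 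What Oxtoby--Ulam combined with Brown's theorem actually provides is a continuous surjection $\phi:I^{n}\to M$, injective on the open cube, carrying $\lambda$ to $\mu$, with $\Sigma:=\phi(\partial I^{n})$ a nowhere dense null set. This only induces an embedding of the boundary-fixing cube homeomorphisms into $\SM[M,\mu]$; the image consists of maps fixing $\Sigma$ pointwise and is \emph{not} dense (an irrational rotation of $\T^{2}$ moves every point of the singular $1$-skeleton by a definite amount, so it cannot be uniformly approximated by homeomorphisms fixing that skeleton), and a given $T\in\SM[M,\mu]$ does not lift to a cube homeomorphism. So ``it suffices to prove the statement on the cube'' is exactly the nontrivial cube-to-manifold transfer developed at length in \cite{BF}, not a one-line conjugation; as written your argument proves the lemma only for $(I^{n},\lambda)$.

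The paper avoids this difficulty entirely by working locally: it covers all but $\epsilon/2$ of the mass by finitely many disjoint balls $U_{i}$ of diameter less than $\delta$, chooses sub-balls $V_{i}\subset U_{i}$ still carrying mass at least $1-\epsilon$, and then, ball by ball, invokes Corollary 9 of \cite{DF} to replace $T$ inside $U_{i}$ by a homeomorphism $g$ with $g=T$ off $U_{i}$, $\|g-T\|\le \mathrm{Diam}(V_{i})$, and $g^{p_{i}}=\mathrm{id}$ on $V_{i}$. Because each surgery is confined to a single small ball, the uniform estimate is automatic and no identification of $M$ with a cube is ever needed, so the argument runs on an arbitrary compact manifold with an OU measure. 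To salvage your version you would need either to quote the transfer theorems of \cite{BF} explicitly, or to run the Lax--Alpern construction directly on $M$ using a cellular decomposition of $M$ in place of the dyadic cubes.
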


\begin{proof}
Let be given $\epsilon>0$ and $\delta>0$. Using Vitali's covering lemma we can find a finite collection of disjoint balls $\{U_{i}\}_{i=1}^{N}$ such that $Diam(U_{i})<\delta$ for each $i$ and
$$\mu(\bigcup_{i=1}^{N}U_{i})>1-\frac{\epsilon}{2}.$$
Now, we take balls $V_{i}\subset U_{i}$ such that:
$$\mu(\bigcup_{i=1}^{N}V_{i})>1-\epsilon.$$
Now, let be $T\in \SM[X,\mu]$, $V\subset U$ balls in $X$ such that $Diam(U)<\delta$. Then it follows from corollary 9 in \cite{DF} that there exists $g\in\SM[X,\mu]$ and $p\in\N$ such that $g^{p}$ is the identity on $V$, $g=T$ on $U^{c}$ and $||g-T||\leq Diam(V)\leq \delta$.
So, there exists $g_{1}\in\SM[X,\mu]$ and $p_{1}\in\N$ such that $g_{1}^{p_{1}}$ is the identity on $V_{1}$, $g_{1}=T$ on $U_{1}^{c}$ and $||g_{1}-T||\leq Diam(V_{1})\leq \delta$. If we repeat the argument for $g_{1}$ in place of $T$ and so on we obtain $g_{2},...,g_{N}$ in $\SM[X,\mu]$ and if we define $g:=g_{N}$ then we have that $g\in\SM[X,\mu]$, $g^{p_{i}}$ is the identity on $V_{i}$ for each $i$, $g=T$ on $(U_{1}\cup ... \cup U_{N})^{c}$ and $||T-g||<\delta.$ As we have that $\mu(Per(g))>1-\epsilon$ then the proof is over.  

\end{proof}

Now, let us come back to the proof of theorem \ref{teoprincipalrt}. Given $T\in \SM[M,\mu]$, we define:
$$\SR(T):=\{x\in M : \liminf_{n\to +\infty} r_{n}d(f(T^{n}(x)),f(x))<+\infty\}.$$
Now, if we fix $T\in \SM[M,\mu]$, $k>0$ and $n\in \N$ then we define:
$$X_{n,k}^{f}(T)=\{x\in M : r_{n}d(f(T^{n}(x)),f(x))<k\}$$
Then, we have the following claim:

\begin{claim}
\label{igualdadedeconjunto}
$$\SR(T)=\bigcup_{k>0}\bigcap_{m\geq 1}\bigcup_{n\geq m}X_{n,k}^{f}(T)$$
\end{claim}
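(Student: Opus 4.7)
The claim is a purely set-theoretic translation of the condition ``$\liminf < +\infty$'' and carries no dynamical content, so the plan is simply to unfold the definition of $\liminf$ and verify the two inclusions. Throughout, write $a_n(x) := r_n d(f(T^n(x)), f(x))$, so that $a_n(x) \geq 0$ and $\SR(T) = \{x : \liminf_{n\to\infty} a_n(x) < +\infty\}$ and $X_{n,k}^f(T) = \{x : a_n(x) < k\}$.

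For the inclusion $\SR(T) \subseteq \bigcup_{k>0}\bigcap_{m\geq 1}\bigcup_{n\geq m} X_{n,k}^f(T)$, I would take $x \in \SR(T)$ and set $L := \liminf_{n\to\infty} a_n(x) < +\infty$. Choosing any $k > L$ (for concreteness $k = L+1 > 0$, since $L \geq 0$), the definition of $\liminf$ gives $\inf_{n\geq m} a_n(x) \leq L < k$ for every $m \geq 1$, so for each such $m$ there exists $n \geq m$ with $a_n(x) < k$, i.e.\ $x \in X_{n,k}^f(T)$. Hence $x \in \bigcap_{m\geq 1}\bigcup_{n\geq m} X_{n,k}^f(T)$, and a fortiori $x$ lies in the union over $k > 0$.

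For the reverse inclusion, I would take $x$ in the right-hand side, so there exists some $k > 0$ with the property that for every $m \geq 1$ one can find $n \geq m$ satisfying $a_n(x) < k$. This exactly says that the set $\{n : a_n(x) < k\}$ is infinite, hence $\inf_{n\geq m} a_n(x) \leq k$ for every $m$, and therefore $\liminf_{n\to\infty} a_n(x) \leq k < +\infty$. Thus $x \in \SR(T)$.

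There is no real obstacle: the only point to watch is that in the forward direction one must pick $k$ strictly greater than the value of the $\liminf$ (not equal to it), and one should note that the $\liminf$ is automatically nonnegative so the choice $k = L+1$ is indeed positive, consistent with the range $k > 0$ in the union. Once the two inclusions are established the identity in the claim follows.
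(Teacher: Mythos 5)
Your proof is correct and follows essentially the same route as the paper: both directions simply unfold the definition of $\liminf$ for the nonnegative sequence $a_n(x)=r_n d(f(T^n(x)),f(x))$, choosing $k$ strictly larger than the finite $\liminf$ in one direction and noting that infinitely many $n$ with $a_n(x)<k$ force $\liminf a_n(x)\leq k<+\infty$ in the other. Your remark that one must take $k>L$ (not $k=L$) and that $L\geq 0$ guarantees $k=L+1>0$ is a slightly more careful rendering of exactly the paper's argument.
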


In fact, if $x\in \SR(T)$ then $\liminf_{n\to +\infty} r_{n}d(f(T^{n}(x)),f(x))=c\,\,\,\in\,\R$. If we take $k>c$ then there exists a sequence $(n_{j})$ in $\N$ such that $r_{n_{j}}d(f(T^{n_{j}}(x)),f(x))<k$ for all $j\in\N$ and this shows that $x\in \bigcup_{k>0}\bigcap_{m\geq 1}\bigcup_{n\geq m}X_{n,k}^{f}(T)$.
On the other hand if $x\in \bigcup_{k>0}\bigcap_{m\geq 1}\bigcup_{n\geq m}X_{n,k}^{f}(T)$ then there exists $k>0$ and a sequence $n_{j}$ in $\N$ such that $r_{n_{j}}d(f(T^{n_{j}}(x)),f(x))<k$ for all $j\in \N$. Then we have that $\liminf_{n\to +\infty} r_{n}d(f(T^{n}(x)),f(x))<k<+\infty$. Then $x\in\SR(T)$ and this proves the claim.   

As a consequence of the claim \ref{igualdadedeconjunto} we get the following claim:

\begin{claim}
\label{medidadeconjunto}
$\mu(\SR(T))>1-\epsilon$ if and only if there exists $k>0$ such that for all $m\in \N$ , there exists a positive integer $l>m$ such that $\mu(\bigcup_{n=m}^{l}X_{n,k}^{f}(T))>1-\epsilon$.
\end{claim}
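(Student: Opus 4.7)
My plan is to derive the claim from Claim \ref{igualdadedeconjunto} by repeatedly invoking continuity of the probability measure $\mu$ under monotone sequences of sets. That identity expresses $\SR(T)$ as an increasing union (over $k$) of decreasing intersections (over $m$) of increasing unions (over the tail endpoint $l$) of the finite unions $\bigcup_{n=m}^{l}X_{n,k}^{f}(T)$. Since $\mu$ is finite, each monotone set operation converts into the corresponding monotone limit of measures (continuity from below for increasing sequences, continuity from above for decreasing ones), and the iff statement falls out by applying this three times.

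For the ``$\Rightarrow$'' direction, set $A_{k}:=\bigcap_{m\geq 1}\bigcup_{n\geq m}X_{n,k}^{f}(T)$, so that $\SR(T)=\bigcup_{k>0}A_{k}$ is an increasing union in $k$. From $\mu(\SR(T))>1-\epsilon$, continuity from below furnishes some $k>0$ with $\mu(A_{k})>1-\epsilon$. The containment $A_{k}\subset\bigcup_{n\geq m}X_{n,k}^{f}(T)$, valid for every $m$, immediately yields $\mu\bigl(\bigcup_{n\geq m}X_{n,k}^{f}(T)\bigr)>1-\epsilon$. Since $\bigcup_{n\geq m}X_{n,k}^{f}(T)$ is the increasing union in $l$ of the finite unions $\bigcup_{n=m}^{l}X_{n,k}^{f}(T)$, a further application of continuity from below produces the required $l>m$.

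For the ``$\Leftarrow$'' direction, pick the $k>0$ granted by the hypothesis. For each $m$, the chosen $l$ gives $\mu\bigl(\bigcup_{n\geq m}X_{n,k}^{f}(T)\bigr)\geq\mu\bigl(\bigcup_{n=m}^{l}X_{n,k}^{f}(T)\bigr)>1-\epsilon$. Continuity from above of the decreasing sequence $\bigcup_{n\geq m}X_{n,k}^{f}(T)$ (as $m\to\infty$) then gives $\mu(A_{k})\geq 1-\epsilon$, and hence $\mu(\SR(T))\geq\mu(A_{k})\geq 1-\epsilon$.

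The one delicate point is precisely here: strict inequality for each term of a decreasing sequence survives only as a weak inequality in the limit, so the ``$\Leftarrow$'' direction as I have argued produces $\mu(\SR(T))\geq 1-\epsilon$ rather than $>1-\epsilon$. This minor asymmetry is harmless in the intended application, since one can close the gap either by replacing the right-hand $1-\epsilon$ with $1-\epsilon/2$ in the hypothesis, or equivalently by reading the claim with ``$\geq$'' on both sides; either reformulation is sufficient to later extract a dense $G_\delta$ set from the approximating family provided by Lemma \ref{Alpern}.
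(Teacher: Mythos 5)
Your proof is correct and follows essentially the same route as the paper: the decomposition of $\SR(T)$ from Claim \ref{igualdadedeconjunto} combined with three applications of monotone continuity of the finite measure $\mu$ (from below in $k$, from below in $l$, from above in $m$). The one place where you diverge is in fact an improvement: the paper's own proof of the reverse implication asserts that the strict inequality $\mu(\bigcup_{n\geq m}X_{n,k}^{f}(T))>1-\epsilon$ passes to a strict inequality in the decreasing limit over $m$, which is not justified; your observation that only $\mu(\SR(T))\geq 1-\epsilon$ follows, and that this weaker conclusion is harmless for extracting the residual set $\bigcap_{n}\SR^{1/n}$, is the correct reading of the claim.
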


In fact, let us suppose that $\mu(\SR(T))>1-\epsilon$. Then, we have that
$$\mu(\SR(T))=\lim_{k\to +\infty}\mu(\bigcap_{m\geq 1}\bigcup_{n\geq m}X_{n,k}^{f}(T))>1-\epsilon$$
and then there exists $k>0$ such that $\mu(\bigcap_{m\geq 1}\bigcup_{n\geq m}X_{n,k}^{f}(T))>1-\epsilon$. So, there exists $k>0$ such that for all $m\in\N$  $\mu(\bigcup_{n\geq m}X_{n,k}^{f}(T))>1-\epsilon$. It follows that there exists $l>m$ such that $\mu(\bigcup_{n=m}^{l}X_{n,k}^{f}(T))>1-\epsilon$.
On the other hand, let us suppose that there exists $k>0$ such that for all $m\in \N$, there exists a positive integer $l>m$ such that $\mu(\bigcup_{n=m}^{l}X_{n,k}^{f}(T))>1-\epsilon$. This implies that $\mu(\bigcup_{n\geq m}X_{n,k}^{f}(T))>1-\epsilon$ and letting $m\to \infty$ we have $\mu(\bigcap_{m\geq 1}\bigcup_{n\geq m}X_{n,k}^{f}(T))>1-\epsilon$. Now, it is clear that $\mu(\SR(T))>1-\epsilon$ and this finish the claim \ref{medidadeconjunto}.
\\ \\
Now, let us define for each $\epsilon>0$ and $l>m$ in $\N$ the set:
$$\SR_{\epsilon}^{m,l}:=\{T\in \SM[M,\mu]:there\,\,\,exists\,\,\,k>0\,\,\, such\,\,that\,\, \mu(\bigcup_{n=m}^{l}X_{n,k}^{f}(T))>1-\epsilon\}$$
Then we have:

\begin{claim}
\label{open}
$\SR_{\epsilon}^{m,l}$ is open in the uniform topology.
\end{claim}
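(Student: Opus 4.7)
My plan is to show that if $T \in \SR_{\epsilon}^{m,l}$ with witness $k>0$, then every $S$ sufficiently close to $T$ in the uniform metric lies in $\SR_{\epsilon}^{m,l}$ with the slightly enlarged witness $k+1$. First I would set $A := \bigcup_{n=m}^{l} X_{n,k}^{f}(T)$, so that $\mu(A) > 1 - \epsilon$, and let $R := \max_{m \leq n \leq l} r_n$, which is finite because the index range is finite. Since $\mu$ is locally positive and the maps $S, T$ are continuous, the essential supremum coincides with the ordinary supremum, so the hypothesis $||S - T|| < \delta$ actually yields $d(S(x), T(x)) < \delta$ for every $x \in M$.

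The main technical point would be an iterate-by-iterate control. Since $M$ is compact, each of the finitely many maps $T, T^2, \dots, T^l$ is uniformly continuous, as is $f$. An induction based on the estimate
$$d(S^{n+1}(x), T^{n+1}(x)) \leq d(S(S^n(x)), T(S^n(x))) + d(T(S^n(x)), T(T^n(x))) \leq \delta + \omega_T(d(S^n(x), T^n(x))),$$
where $\omega_T$ denotes a modulus of continuity for $T$, produces a function $\eta(\delta) \to 0$ as $\delta \to 0$ such that $||S - T|| < \delta$ implies $\sup_{x \in M} d(S^n(x), T^n(x)) < \eta(\delta)$ for every $n \in \{m, \dots, l\}$. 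Shrinking $\delta$ further and invoking the uniform continuity of $f$, I would arrange that $r_n \, d(f(S^n(x)), f(T^n(x))) < 1$ holds for all $x \in M$ and all $n \in \{m, \dots, l\}$ simultaneously.

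Once this is in place, the conclusion should be immediate. For $x \in X_{n,k}^{f}(T)$, the triangle inequality gives
$$r_n \, d(f(S^n(x)), f(x)) \leq r_n \, d(f(S^n(x)), f(T^n(x))) + r_n \, d(f(T^n(x)), f(x)) < 1 + k,$$
so $X_{n,k}^{f}(T) \subseteq X_{n,k+1}^{f}(S)$ for each $n \in \{m, \dots, l\}$. Taking the finite union yields $A \subseteq \bigcup_{n=m}^{l} X_{n,k+1}^{f}(S)$, whence $\mu\bigl(\bigcup_{n=m}^{l} X_{n,k+1}^{f}(S)\bigr) \geq \mu(A) > 1-\epsilon$, and thus $S \in \SR_{\epsilon}^{m,l}$ with witness $k+1$. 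I expect the iterate-control step to be the main (though essentially routine) obstacle: it is crucial here that $l$ is fixed and finite, since the modulus of continuity of $T$ would not allow us to propagate smallness through infinitely many iterates, and the enlargement from $k$ to $k+1$ is what lets us absorb the $\eta(\delta)$-error in a single bounded constant.
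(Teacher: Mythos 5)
Your proof is correct and follows the same basic strategy as the paper: control $d(S^n(x),T^n(x))$ for the finitely many iterates $m\leq n\leq l$ via the uniform continuity of $T,\dots,T^{l}$ and of $f$, then apply the triangle inequality to compare $X_{n,k}^{f}(T)$ with the corresponding set for $S$. The one genuine difference is that you enlarge the witness from $k$ to $k+1$, concluding $X_{n,k}^{f}(T)\subseteq X_{n,k+1}^{f}(S)$, whereas the paper asserts the inclusion $X_{n,k}^{f}(T)\subset X_{n,k}^{f}(S)$ with the \emph{same} $k$. Your version is actually the more defensible one: since $X_{n,k}^{f}(T)$ is defined by a strict inequality with no uniform margin over $x$, no single $\delta$ forces the paper's same-$k$ inclusion for all $x$ in the set, and the definition of $\SR_{\epsilon}^{m,l}$ only demands that \emph{some} $k$ work for $S$, so passing to $k+1$ costs nothing. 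Your explicit inductive propagation of the perturbation through the iterates, and the remark that the essential supremum equals the supremum for continuous maps under a locally positive measure, are details the paper leaves implicit; both are correct and worth spelling out.
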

Let us fix $n,k\in\N$ and $T\in\SM[M,\mu]$. If $S\in\SM[M,\mu]$ then by triangle inequality we have that $r_{n}d(f(S^{n}(x)),f(x))\leq r_{n}d(f(S^{n}(x)),f(T^{n}(x)))+r_{n}d(f(T^{n}(x)),f(x))$. So, using the continuity of $f$ we have that if $S$ is sufficiently close to $T$ in the uniform topology (see definition \ref{homeopreserving}) then $X_{n,k}^{f}(T)\subset X_{n,k}^{f}(S)$. Using the same argument, if we fix $k>0$, $l>m$ and $T\in\SM[M,\mu]$ then $\bigcup_{n=m}^{l}X_{n,k}^{f}(T)\subset\bigcup_{n=m}^{l}X_{n,k}^{f}(T)$ for every $S\in\SM[M,\mu]$ sufficiently close to $T$. This shows that $\SR_{\epsilon}^{m,l}$ is open and finish the proof of claim \ref{open}.
\\ \\
Now, let us define for each $\epsilon>0$ the set
$$\SR^{\epsilon}:=\{T\in \SM[M,\mu] : \mu(\SR(T))>1-\epsilon \}.$$
It follows from claim \ref{igualdadedeconjunto} that $\SR(T)=\bigcup_{k>0}\bigcap_{m\geq 1}\bigcup_{n\geq m}X_{n,k}^{f}(T)$. Now, using the claim \ref{medidadeconjunto} we get that:
$$\SR^{\epsilon}=\bigcap_{m\in\N}\bigcup_{l>m}\SR_{\epsilon}^{m,l}$$
which shows that $\SR^{\epsilon}$ is a $G_{\delta}$ set for each $\epsilon>0$.
If we show that $\SR^{\epsilon}$ is dense in $\SM[M,\mu]$ for each $\epsilon>0$ then 
$\SR:=\bigcap_{n\in\N}\SR^{1/n}$ will be the generic of Theorem \ref{teoprincipalrt}.
Now, note that $Per(T)\subset \SR(T)$ and then we have that
$\{T\in \SM[M,\mu] : \mu(Per(T))>1-\epsilon\}\subset \SR^{\epsilon}$. So it follows from lemma \ref{Alpern} that $\SR^{\epsilon}$ is dense and this proves the Theorem \ref{teoprincipalrt}.
\end{proof}

\begin{proof}(Theorem \ref{teoprincipalht})
Given $T\in \SM[M,\mu]$ and $p\in\N$ we define:
$$\SW_{p}^{f}(y,\{r_{n}\},T):=\{x\in M : d(f(T^{n}(x)),f(y))<\frac{p}{r_{n}}\,\,\,for\,\,infinitely\,\,many\,\,n\}$$
and note that:
$$\SW_{p}^{f}(y,\{r_{n}\},T)=\bigcap_{m\geq 1}\bigcup_{n\geq m}T^{-n}(f^{-1}(B(f(y),\frac{p}{r_{n}}))).$$

Then we have the following claim:

\begin{claim}
\label{medidahitting}
Given $\epsilon>0$ we have that $\mu(\SW_{p}^{f}(y,\{r_{n}\},T))<\epsilon$ if and only if for every $m\in\N$(sufficiently large) there exists $l>m$(sufficiently large) such that $\mu\left(\bigcup_{n=m}^{l}T^{-n}(f^{-1}(B(f(y),\frac{p}{r_{n}})))\right)<\epsilon$.
\end{claim}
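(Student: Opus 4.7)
The plan is to unwind the $\limsup$ structure of $\SW_{p}^{f}(y,\{r_{n}\},T)$ and apply continuity of the finite measure $\mu$ along the monotone sequences that appear. Writing $U_{n}:=T^{-n}\bigl(f^{-1}\bigl(B(f(y),p/r_{n})\bigr)\bigr)$, the identity recorded just before the claim becomes
$$\SW_{p}^{f}(y,\{r_{n}\},T)=\bigcap_{m\geq 1}V_{m},\qquad V_{m}:=\bigcup_{n\geq m}U_{n},$$
and, setting $W_{m,l}:=\bigcup_{n=m}^{l}U_{n}$, one has $W_{m,l}\nearrow V_{m}$ as $l\to+\infty$ while $V_{m}\searrow \SW_{p}^{f}$ as $m\to+\infty$. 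Continuity of the probability measure $\mu$ therefore gives $\mu(W_{m,l})\to\mu(V_{m})$ as $l\to+\infty$ for each $m$, and $\mu(V_{m})\to\mu(\SW_{p}^{f})$ as $m\to+\infty$.

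For the ``only if'' direction, suppose $\mu(\SW_{p}^{f})<\epsilon$. Continuity from above yields an $m_{0}$ with $\mu(V_{m})<\epsilon$ for every $m\geq m_{0}$, and since $W_{m,l}\subseteq V_{m}$, every $l>m$ satisfies $\mu(W_{m,l})<\epsilon$, which is the conclusion. For the converse, I read the hypothesis as: for every sufficiently large $m$ one may take $l>m$ arbitrarily large with $\mu(W_{m,l})<\epsilon$. Monotonicity of $\mu(W_{m,l})$ in $l$ forces $\mu(V_{m})=\lim_{l}\mu(W_{m,l})\leq\epsilon$ for each such $m$, and letting $m\to+\infty$ gives $\mu(\SW_{p}^{f})\leq\epsilon$. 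Strict inequality is then recovered by the standard slack trick used in the proof of claim \ref{medidadeconjunto}: the finite-union estimate can in fact be sharpened to $<\epsilon-\delta$ for some $\delta>0$, which is what is actually invoked in the density step that follows.

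No serious obstacle is anticipated; the argument is pure measure theory. The only delicate point is the strict-versus-nonstrict inequality at the boundary, handled just as in claim \ref{medidadeconjunto}. The real value of the claim lies in the finite-union reformulation it provides: combined with the continuity of $f$ and with the openness-in-$T$ type estimate already exploited in claim \ref{open}, it will let the set $\{T\in\SM[M,\mu]:\mu(\SW_{p}^{f}(T))<\epsilon\}$ be exhibited as a $G_{\delta}$ in the uniform topology, which is the ingredient needed to finish Theorem \ref{teoprincipalht} through Lemma \ref{Alpern}.
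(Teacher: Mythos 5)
Your proof follows essentially the same route as the paper's: rewrite $\SW_{p}^{f}(y,\{r_{n}\},T)$ as a decreasing intersection of tails $V_{m}$, each an increasing limit of the finite unions $W_{m,l}$, and pass the measure bound back and forth using continuity of $\mu$ along these monotone sequences. You are in fact slightly more explicit than the paper at the one delicate point (the limit in the converse direction a priori only yields $\mu(\SW_{p}^{f})\leq\epsilon$), which the paper's own proof silently records as a strict inequality.
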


In fact, if $\mu(\SW_{p}^{f}(y,\{r_{n}\},T))<\epsilon$ then we get that:
$$\lim_{m\to +\infty}\mu\left(\bigcup_{n\geq m}T^{-n}(f^{-1}(B(f(y),\frac{p}{r_{n}})))\right)<\epsilon$$
which implies that $\mu\left(\bigcup_{n\geq m}T^{-n}(f^{-1}(B(f(y),\frac{p}{r_{n}})))\right)<\epsilon$ for all $m\in\N$ sufficiently big. Then we have that
$\lim_{l\to +\infty}\mu\left(\bigcup_{n=m}^{l}T^{-n}(f^{-1}(B(f(y),\frac{p}{r_{n}})))\right)<\epsilon$ for all $m\in\N$. So, we have that for all $m\in\N$ there exists $l>m$(sufficiently large) such that: $$\mu\left(\bigcup_{n=m}^{l}T^{-n}(f^{-1}(B(f(y),\frac{p}{r_{n}})))\right)<\epsilon.$$
On the other hand let us suppose that for all $m\in\N$ there exists $l>m$(sufficiently large) such that: $$\mu\left(\bigcup_{n=m}^{l}T^{-n}(f^{-1}(B(f(y),\frac{p}{r_{n}})))\right)<\epsilon.$$
Then, letting $l\to +\infty$ we get that $\mu\left(\bigcup_{n\geq m}T^{-n}(f^{-1}(B(f(y),\frac{p}{r_{n}})))\right)<\epsilon$ for all $m\in\N$ and then we get that $\mu(\SW_{p}^{f}(y,\{r_{n}\},T))<\epsilon$. This proves the claim \ref{medidahitting}.
\\ \\
Now, let us define for each $\epsilon>0$ the set:
$$\SG_{\epsilon,p}=\{T\in\SM[M,\mu] : \mu(\SW_{p}^{f}(y,\{r_{n}\},T))<\epsilon\}$$
and for each $\epsilon>0$ and $l>m$:
$$\SG_{\epsilon,p}^{m,l}:=\{T\in\SM[M,\mu] : \mu\left(\bigcup_{n=m}^{l}T^{-n}(f^{-1}(B(f(y),\frac{p}{r_{n}})))\right)<\epsilon\}.$$

Then, the next claim is the following:

\begin{claim}
\label{gdelta}
Each set $\SG_{\epsilon,p}^{m,l}$ is open in the uniform topology and $\SG_{\epsilon,p}=\bigcap_{m\in\N}\bigcup_{l>m}\SG_{\epsilon,p}^{m,l}$. In particular, $\SG_{\epsilon,p}$ is a $G_{\delta}$ set. Furthermore, $\SG_{\epsilon,p}$ is dense in $\SM[M,\mu]$.
\end{claim}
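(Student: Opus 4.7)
The plan is to dispose of the three assertions in the natural order: first the set-theoretic identity $\SG_{\epsilon,p}=\bigcap_{m\in\N}\bigcup_{l>m}\SG_{\epsilon,p}^{m,l}$, then openness of each $\SG_{\epsilon,p}^{m,l}$ (which combined with the identity gives the $G_\delta$ property), and finally density. The identity is a direct transcription of Claim \ref{medidahitting}: $T\in\SG_{\epsilon,p}$ means $\mu(\SW_p^f(y,\{r_n\},T))<\epsilon$, which by Claim \ref{medidahitting} is equivalent to saying that for every $m$ there exists $l>m$ with $\mu\bigl(\bigcup_{n=m}^l T^{-n}(f^{-1}(B(f(y),p/r_n)))\bigr)<\epsilon$, i.e.\ $T\in\SG_{\epsilon,p}^{m,l}$ for some $l>m$.

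For openness of $\SG_{\epsilon,p}^{m,l}$ I would fix $T$ in this set and put $A_T:=\bigcup_{n=m}^l T^{-n}(f^{-1}(B(f(y),p/r_n)))$, an open set with $\mu(A_T)<\epsilon$. The idea mirrors Claim \ref{open}: since $S\mapsto S^n$ is continuous in the uniform topology for each fixed $n$ and since $f$ is uniformly continuous on the compact $M$, the compositions $f\circ S^n$ converge to $f\circ T^n$ uniformly as $S\to T$. The range $n\in[m,l]$ is finite, so for any prescribed $\eta>0$ and $S$ sufficiently close to $T$ one obtains the inclusion
$$A_S\;\subseteq\;\bigcup_{n=m}^{l} T^{-n}\bigl(f^{-1}(B(f(y),(p+\eta)/r_n))\bigr)\;=:\;A_T^{(p+\eta)},$$
because $d(f(S^n(x)),f(y))<p/r_n$ forces $d(f(T^n(x)),f(y))<(p+\eta)/r_n$ whenever $\sup_x\,|f(S^n(x))-f(T^n(x))|<\eta/r_n$. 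It then remains to choose $\eta>0$ so that $\mu(A_T^{(p+\eta)})<\epsilon$ still holds.

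Density is the cleanest step and follows directly from Lemma \ref{Alpern}. Given $T_0\in\SM[M,\mu]$ and a neighborhood of it, Lemma \ref{Alpern} produces a nearby $T$ with $\mu(\mathrm{Per}(T))>1-\epsilon$. For any periodic point $x$ of $T$, the finite forward orbit means $d(f(T^n(x)),f(y))$ takes only finitely many values: either all are strictly positive, in which case the condition $d(f(T^n(x)),f(y))<p/r_n$ fails for $n$ large enough (since $p/r_n\to 0$) and so $x\notin \SW_p^f(y,\{r_n\},T)$; or some iterate lands in $f^{-1}(\{f(y)\})$, i.e.\ $x\in\bigcup_{n\geq 0} T^{-n}(f^{-1}(\{f(y)\}))$. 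By $T$-invariance of $\mu$ and the hypothesis $\mu(f^{-1}(\{f(y)\}))=0$, this latter set is $\mu$-null. Therefore
$$\mu(\SW_p^f(y,\{r_n\},T))\;\leq\;1-\mu(\mathrm{Per}(T))\;<\;\epsilon,$$
so $T\in\SG_{\epsilon,p}$, showing density.

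The main obstacle is the last step of the openness argument, namely selecting $\eta>0$ with $\mu(A_T^{(p+\eta)})<\epsilon$. The function $s\mapsto\mu(A_T^{(s)})$ is monotone and left-continuous, but its right limit at $s=p$ equals $\mu\bigl(\bigcup_{n=m}^l T^{-n}(f^{-1}(\overline{B(f(y),p/r_n)}))\bigr)$, which may exceed $\mu(A_T)$ if some sphere $\partial B(f(y),p/r_n)$ pulls back under $f$ to a set of positive $\mu$-measure. One can circumvent this either by noting that such bad radii form a countable set (the concentric spheres are pairwise disjoint) and absorbing a small perturbation of $p$ into the later intersection over $p$ in the final generic set, or by replacing the open balls with slightly smaller ones in the very definition of $\SG_{\epsilon,p}^{m,l}$. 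Once this choice of $\eta$ is secured, the rest of the argument is routine bookkeeping.
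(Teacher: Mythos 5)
Your treatment of the set identity and of density is correct and essentially identical to the paper's: the identity is a restatement of Claim \ref{medidahitting}, and density follows from Lemma \ref{Alpern} once one observes that $\per(T)\subset M\setminus \SW_{p}^{f}(y,\{r_{n}\},T)$ up to a $\mu$-null set. Your density argument is in fact more complete than the paper's, which merely asserts this inclusion; your case split (periodic orbits staying at positive distance from $f^{-1}(\{f(y)\})$ versus those meeting the $\mu$-null set $\bigcup_{j}T^{-j}(f^{-1}(\{f(y)\}))$) is exactly the justification the paper omits.

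On openness you have correctly identified a real issue: the inclusion $A_{S}\subseteq A_{T}^{(p+\eta)}$ only bounds $\limsup_{S\to T}\mu(A_{S})$ by the measure of the closed-ball version of $A_{T}$, which can exceed $\mu(A_{T})$ when some sphere $f^{-1}(\partial B(f(y),p/r_{n}))$ has positive measure. (The paper's own one-line reference to ``the same argument of Claim \ref{open}'' suffers from the same defect: in Claim \ref{open} the slack is absorbed by the existential quantifier over $k$, and here there is no such quantifier.) However, your proposed repairs --- perturbing $p$ or shrinking the balls in the definition --- modify the claim rather than prove it, and they are not needed: the set $\SG_{\epsilon,p}^{m,l}$ is open exactly as stated. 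The point you are missing is the $\mu$-invariance of $T$. Writing $W_{n}:=f^{-1}(B(f(y),p/r_{n}))$, each term $\mu(T^{-n}(W_{n}))=\mu(W_{n})$ is \emph{independent of} $T$; only the overlaps vary. Setting $F_{j}(T):=\mu\bigl(\bigcup_{n=m}^{j}T^{-n}(W_{n})\bigr)$ one has
$$F_{j+1}(T)=F_{j}(T)+\mu(W_{j+1})-\mu\Bigl(\bigl(\textstyle\bigcup_{n=m}^{j}T^{-n}(W_{n})\bigr)\cap T^{-(j+1)}(W_{j+1})\Bigr),$$
and the subtracted term is the measure of an \emph{open} set; by inner regularity (choose a compact $K$ inside it, note that $\min_{x\in K}\max_{n}\dist(T^{n}x,W_{n}^{c})>0$, and use that $\sup_{x}d(S^{n}x,T^{n}x)\to 0$ for each $n\le l$ as $S\to T$) it is lower semicontinuous in $T$. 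Since $F_{m}$ is constant, induction gives that $F_{l}(T)=\mu(A_{T})$ is upper semicontinuous, hence $\{T:\mu(A_{T})<\epsilon\}$ is open with no modification of $p$ or of the balls. With this substitution your proof is complete.
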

The equality follows from lemma \ref{medidahitting}. The opening follows from the same argument of claim \ref{open} and will be omitted. So, let us prove the density.
Using that $\mu(f^{-1}(\{f(y)\})=0$ we get that:
$$Per(T)\subset X-W_{p}^{f}(y,\{r_{n}\},T)\,\,(mod\mu)$$
and then:
$$\{T\in\SM[M,\mu] : \mu(Per(T))>1-\epsilon \}\subset \SG_{\epsilon,p}$$
and follows from lemma \ref{Alpern} that $\SG_{\epsilon,p}$ is dense. The claim \ref{gdelta} is over now.
\\ \\
So, we have that $\SG_{\epsilon,p}$ is a generic set for every $\epsilon>0$. Now, if we define:
$$\SG_{p}:=\bigcap_{q\in\N}\SG_{1/q,p}$$
then $\SG_{p}$ is a generic set and if $T\in\SG_{p}$ then $\mu(\SW_{p}^{f}(y,\{r_{n}\},T)=0$, which shows that $\liminf_{n\to +\infty}r_{n}d(f(T^{n}(x),f(y)))\geq p$ for $\mu$ a.e. $x\in M$ if $T\in\SG_{p}$. Now, if we define:
$$\SG:=\bigcap_{p\in\N}\SG_{p}$$
then $\SG$ is a generic set and $\liminf_{n\to +\infty}r_{n}d(f(T^{n}(x)),f(y))=+\infty$ for $\mu$ a.e. $x\in M$ if $T\in\SG$. This finish the proof of theorem \ref{teoprincipalht}.
  
\end{proof}

\begin{proof}(Theorem \ref{teoprincipalrtinfinite})
Let us begin with the following lemma:

\begin{lemma}
\label{Periodicoinfinito}
Let be $\lambda$ the lebesgue measure on $\R^{n}$. Then, for every $\epsilon>0$ the set of homeomorphisms $T\in \SM(\R^{n},\lambda)$ such that $\lambda((Per(T))^{c})<\epsilon$ is dense in $\SM[\R^{n},\lambda]$ with respect to compact open topology, where $Per(T)$ denotes the set of periodic points of $T$.
\end{lemma}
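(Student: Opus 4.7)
The plan is to reduce to the compact case of Lemma \ref{Alpern} by confining all perturbations to a large ball and forcing the new homeomorphism to equal the identity outside it. Fix $T_{0}\in\SM[\R^{n},\lambda]$, a basic compact-open neighborhood $\SC(T_{0},K,\delta)$, and $\epsilon>0$; the goal is to produce $T$ in this neighborhood satisfying $\lambda((\per(T))^{c})<\epsilon$.

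First, choose $R>0$ large enough that $K\cup T_{0}(K)\cup T_{0}^{-1}(K)$ is contained in the open ball $B_{R}$, and then construct an intermediate $T_{1}\in\SM[\R^{n},\lambda]$ with $\|T_{1}-T_{0}\|_{C^{0}(K)}<\delta/2$ and $T_{1}(x)=x$ for every $x\notin B_{R}$. The existence of $T_{1}$ rests on an Oxtoby--Ulam type extension result: the measure-preserving embedding $T_{0}|_{K}:K\to T_{0}(K)\subset B_{R}$ must be realized as the restriction of a measure-preserving self-homeomorphism of $\overline{B_{R}}$ that is the identity on $\p B_{R}$, using that $T_{0}$ preserves $\lambda$ to get the mass balance $\lambda(B_{R}\setminus K)=\lambda(B_{R}\setminus T_{0}(K))$, and one then glues with the identity on $\R^{n}\setminus B_{R}$.

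Second, apply the argument of Lemma \ref{Alpern} inside $B_{R}$. By Vitali's covering lemma, choose disjoint open balls $U_{1},\ldots,U_{N}\subset B_{R}$ with $\operatorname{diam}(U_{i})<\delta/2$, and concentric closed balls $V_{i}\subset U_{i}$ with $\lambda(\bigcup_{i=1}^{N}V_{i})>\lambda(B_{R})-\epsilon$. Iteratively invoking Corollary~9 of \cite{DF}, construct $T\in\SM[\R^{n},\lambda]$ satisfying $T=T_{1}$ outside $\bigcup_{i}U_{i}$ (so $T=\operatorname{id}$ on $\R^{n}\setminus B_{R}$), $\|T-T_{1}\|_{C^{0}}<\delta/2$, and $T^{p_{i}}=\operatorname{id}$ on each $V_{i}$ for suitable $p_{i}\in\N$. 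The triangle inequality yields $T\in\SC(T_{0},K,\delta)$, and from $\per(T)\supseteq(\R^{n}\setminus B_{R})\cup\bigcup_{i=1}^{N}V_{i}$ one concludes
$$\lambda((\per(T))^{c})\leq\lambda\Bigl(B_{R}\setminus\bigcup_{i=1}^{N}V_{i}\Bigr)<\epsilon.$$

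The principal obstacle is the construction of the intermediate $T_{1}$. The perturbative second step is essentially Lemma \ref{Alpern} transplanted to a ball, each application of Corollary~9 of \cite{DF} being supported in a small ball inside $B_{R}$ and therefore not disturbing the behavior outside $B_{R}$. By contrast, producing a measure-preserving homeomorphism of $\R^{n}$ that is close to $T_{0}$ on $K$ yet equal to the identity outside a prescribed compact set requires a delicate Oxtoby--Ulam type extension, with care to match $T_{0}|_{\p K}$ continuously with the extension on the annular region $B_{R}\setminus K$; once this compactly supported approximation is available, the rest of the proof is a routine localization of the compact-manifold argument.
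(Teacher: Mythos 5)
Your proposal is correct and follows essentially the same route as the paper: localize to a large compact region, use an Oxtoby--Ulam type extension (the paper cites Lemma 12.2 and Corollary A2.6 of \cite{BF}) to get a compactly supported measure-preserving approximation of $T_{0}$ equal to the identity far away, and then run the Vitali/Corollary~9-of-\cite{DF} perturbation of Lemma \ref{Alpern} inside that region. The only cosmetic difference is the order of operations — the paper first perturbs on an invariant cube $C$ and then tapers to the identity on an annulus $C_{1}\setminus C$ via the Homeomorphic Measures Theorem, whereas you taper first and perturb second — which costs the paper an extra $\epsilon$ of non-periodic mass but changes nothing essential.
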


\begin{proof}
Let $\SC(f,K,\delta)$ a basic open set with respect to compact open topology. We can suppose without loss of generality that $K$ is a compact cube. Given $C$ a compact cube containing $K\cup f(K)$ in its interior. Then it follows from Lemma 12.2 of \cite{BF} that there exists $\hat{f}\in\SM[\R^{n},\lambda]$ which leaves $C$ invariant, and agrees with $f$ on $K$.
Now, it follows from the proof of lemma \ref{Alpern} that there exists $g\in\SM(C,\lambda)$, where $\SM(C,\lambda)$ denotes the set of homeomorphisms of $C$ such that the lebesgue measure is invariant, such that $\lambda(Per(g))>\lambda(C)-\epsilon$ and $d(g(x),\hat{f}(x))<\delta$ for $\lambda$-qtp $x\in C$. Let $C_{1}\supset C$ be a cube concentric to $C$ such that $\lambda(C_{1}-C)<\epsilon$ and extend $g$ to a homeomorphism of $C_{1}$ onto itself such that $g$ it is equal to identity on the boundary of $C_{1}$.
Let $A:=C_{1}-C$ and define for each borel set $B\subset A$ the measures:
$$\mu_{1}(B)=\lambda(B)\,\,\,and\,\,\,\mu_{2}(B)=\lambda(g(B)).$$
Therefore by the Homeomorphic Measures Theorem (see Corollary A2.6 on \cite{BF}) there exists a homeomorphism $h:A\to A$ such that $\mu_{2}(h(B))=\mu_{1}(B)$ for each borel set $B\subset A$ and such that $h$ is the identity on the boundary of $A$. Now, define $k:C_{1}\to C_{1}$ such that:
$$k(x)=g(x)\,\,\,in\,\,C\,\,\,and\,\,\,k(x)=g(h(x))\,\,\,in\,\,C_{1}-C$$
and define $\hat{g}:\R^{n}\to \R^{n}$  such that:
$$\hat{g}(x)=k(x)\,\,\,in\,\,C_{1}\,\,\,and\,\,\,\hat{g}(x)=Id\,\,\,in\,\,C_{1}^{c}.$$
Note that $\hat{g}\in\SM(\R^{n},\lambda)$, $\lambda(Per(\hat{g})^{c})<2\epsilon$ and $\hat{g}\in\SC(f,K,\delta)$ which shows the density. 

To complete the proof of Theorem \ref{teoprincipalrtinfinite} we follow the same ideas of Theorem \ref{teoprincipalrt}. In fact let us define for each $T\in\SM[\R^{n},\lambda]$ the set:
$$\SH(T)=\{x\in \R^{n} : \liminf_{n\to \infty} r_{n}|T^{n}(x)-x|<+\infty\}.$$
Given $n,k\in\N$ we define:
$$Y_{n,k}(T)=\{x\in \R^{n} : r_{n}|T^{n}(x)-x|<k\}.$$
The following two claims holds in the same way that the compact case:
\begin{claim}
\label{equalitynoncompact}
$$\SH(T)=\bigcup_{k>0}\bigcap_{m\geq 1}\bigcup_{n\geq m}Y_{n,k}(T)$$
\end{claim}

\begin{claim}
\label{measurenoncompact}
$\lambda([\SH(T)]^{c})<\epsilon$ if and only if there exists $k>0$ such that for all $m\in \N$ , there exists a positive integer $l>m$ such that $\lambda(\bigcap_{n=m}^{l}[Y_{n,k}(T)]^{c})<\epsilon$.
\end{claim}

Now, let us define for each $\epsilon>0$ and $l>m$ in $\N$ the set:
$$\SH_{\epsilon}^{m,l}:=\{T\in \SM[\R^{n},\lambda]:there\,\,\,exists\,\,\,k>0\,\,\, such\,\,that\,\, \lambda(\bigcap_{n=m}^{l}(Y_{n,k}(T))^{c})<\epsilon\}$$

Then we have:

\begin{claim}
\label{opennoncompact}
$\SH_{\epsilon}^{m,l}$ is open in the compact-open topology
\end{claim}
\begin{proof}(claim\ref{opennoncompact})
Let $T\in \SH_{\epsilon}^{m,l}$. Then we have to prove that if $S$ is sufficiently close of $T$ then $S\in\SH_{\epsilon}^{m,l}$.
Note that it is enough to prove that if $S$ is sufficiently close to $T$ then $Y_{n,k}(T)\subset Y_{n,k}(S)$. By triangle inequality:
$$r_{n}|S^{n}(x)-x|\leq r_{n}|S^{n}(x)-T^{n}(x)|+r_{n}|T^{n}(x)-x|$$
Using this inequality and the metric $U$ of the compact open topology we get that if $S$ is close of $T$ then $Y_{n,k}(T)\subset Y_{n,k}(S)$ and this prove the opening.
\end{proof}

Now, let us define for each $\epsilon>0$ the set
$$\SH_{\epsilon}=\{T\in \SM[\R^{n},\lambda] : \mu((\SH(T))^{c})<\epsilon \}$$
and using the claim \ref{measurenoncompact} we get:
$$\SH_{\epsilon}=\bigcap_{m\in\N}\bigcup_{l>m}\SH_{\epsilon}^{m,l}$$
and then $\SH_{\epsilon}$ is a $G_{\delta}$ set and dense by lemma \ref{Periodicoinfinito}. The residual set is then given by
$$\SH:=\bigcap_{n\in\N}\SH_{1/n}$$
which proves the theorem \ref{teoprincipalrtinfinite}.

\end{proof}

\end{proof}

\begin{proof}(Corollary \ref{genericdecay})
Let $T\in\SM[M,\mu]$ be a homeomorphism with superpolynomial decay of correlations.
Then, if we take $\beta>d_{\mu}(y)$ and $t_{n}:=n^{\frac{-1}{\beta}}$ we have that $0<\frac{1}{\beta}<\frac{1}{d_{\mu}(y)}$ and then it follows from the proof of main theorem of \cite{Ga} that the sequence of balls $\{B(y,t_{n})\}$ is Borel-Cantelli, which means that
$$\mu(\{x\in M:d(T^{n}(x),y)<t_{n}\,\,for\,\,infinitely\,\,many\,\,n\})=1.$$
This implies that 
$$\liminf_{n\to +\infty}n^{\frac{1}{\beta}}d(T^{n}(x),y)\leq 1$$
for $\mu$-a.e.$x\in M$. On the other hand, using theorem \ref{teoprincipalht} with $f=Id$ and $r_{n}=n^{\frac{1}{\beta}}$ we get a residual set $\SG_{1}\subset\SM[M,\mu]$ such that $T\in\SG_{1}$ implies:
$$\liminf_{n\to +\infty} n^{\frac{1}{ \beta}}d(T^{n}(x),y)=+\infty\,\,\,for\,\,\mu\,\,a.e.\,\,x\in M.$$
This shows that in this generic set the decay of correlations is not super-polynomial and the proof is over.

\end{proof}

\section*{Acknowledgments} The author would like to thank Alexander Arbieto and Jairo Bochi for helpful discussions about the paper. The author is grateful to IMPA for the nice environment provided during the beginning of this paper. Work partially supported by CAPES from Brazil.

\bibliographystyle{alpha}

\end{document}